\theoremstyle{plain}
\newtheorem{lemma}{Lemma}
\newtheorem{theo}{Theorem}
\newtheorem{remark}{Remark}
\newcommand{\Z}{\mathbb{Z}}
\newcommand{\Q}{\mathbb{Q}}
\newcommand{\kI}{\mathfrak{I}}
\newcommand{\kC}{\mathfrak{C}}
\newcommand{\kT}{\mathfrak{T}}
\newcommand{\kN}{\mathfrak{N}}
\newcommand{\kp}{\mathfrak{p}}
\begin{document}

\title[On the number of classes of extension]{Determination of the number of isomorphism\\
classes of extensions of a $\kp$-adic field}
\author{Maurizio Monge}
\email{maurizio.monge@sns.it}
\address{Scuola Normale Superiore di Pisa - Piazza dei Cavalieri, 7 -
  56126 Pisa}

\subjclass[2000]{11S15, 11S20, Secondary: 05A19, 20D60}
\keywords{p-adic field, local field, enumeration, isomorphism class of extensions, local class field
  theory, Krasner formula, ramification}
\date{\today}

\begin{abstract}
We deduce a formula enumerating the isomorphism classes of extensions
of a $\kp$-adic field $K$ with given ramification $e$ and inertia
$f$. The formula follows from a simple group-theoretic lemma, plus the
Krasner formula and an elementary class field theory computation. It
shows that the number of classes only depends on the ramification and
inertia of the extensions $K/\Q_p$, and $K(\zeta_{p^m})/K$ obtained
adding the $p^m$-th roots of $1$, for all $p^m$ dividing $e$.
\end{abstract}

\maketitle

% main text
Let $K$ be a finite extension of $\Q_p$ with residue field $\bar K$.
Let $n_0=[K : \Q_p]$, and let respectively $e_0 = e(K/\Q_p)$ and
$f_0=f(K/\Q_p)$ be the absolute ramification index and inertia
degree. Formulas for the total number of extensions with given degree
in a fixed algebraic closure were computed by Krasner and Serre
\cite{Krasner1962,Serre1978}, as well as formulas counting all totally
ramified extensions and totally ramified extension with given
valuation of the discriminant.

We will show how it is possible, with some help from class field
theory, to modify such formulas to enumerate the isomorphism classes
of extensions with fixed ramification and inertia, and all extension
with fixed degree. We intend to show in a forthcoming paper the
application of this method to the determination of the number classes
of totally ramified extensions with given discriminant, when possible.

The problem of enumerating isomorphism classes of $\kp$-adic field had
been solved in a special case by Hou-Keating \cite{hou2004enumeration}
for extensions with ramification $e$ satisfying $p^2 \nmid e$, with a
partial result when $p^2 \parallel e$.

\section{Group theoretic preliminaries}

For $k\geq 1$, let $C_k\cong\Z/k\Z$ denote the cyclic group of order
$k$. Given a group $G$, let $\mathscr{F}$ be a family of subgroups of
$G$ which is closed under conjugation and contain a finite number of
subgroups of fixed index in $G$. For each integer $n$, let $\kI_n$ be
the number of conjugacy classes of subgroups $H \in \mathscr{F}$
having index $n$ in $G$. For a finite group $Q$, let $\kT_n(Q)$ denote
the number of two-steps chains of subgroups
$H\vartriangleleft{}J\leq{}G$ such that $H \in \mathscr{F}$, $(G:H) =
n$ and $H$ is normal in $J$ with $J/H \cong Q$.

\begin{lemma}
  For a group $G$ and a family of subgroup $\mathscr{F}$ which is
  closed under conjugation and containing a finite number of subgroups
  of fixed index in $G$ we have
\begin{equation}
\label{eq:1:1}
  \kI_n = \frac{1}{n}\sum_{d|n} \phi(d)\kT_n(C_d),
\end{equation}
for each $n$.
\end{lemma}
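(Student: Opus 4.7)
The plan is to reduce the identity to an orbit-counting argument on $X_n := \{H \in \mathscr{F} : (G : H) = n\}$, on which $G$ acts by conjugation with finitely many orbits by hypothesis, so that $\kI_n$ is the number of orbits. For $H \in X_n$, the stabilizer is $N_G(H)$, so the orbit has size $[G : N_G(H)]$ and
\[
  [N_G(H) : H] \;=\; \frac{[G:H]}{[G:N_G(H)]} \;=\; \frac{n}{|\text{orbit}(H)|},
\]
which in particular shows that $N_G(H)/H$ is a finite group whose order divides $n$.

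Next, I would rewrite $\kT_n(C_d)$ as a sum over $X_n$. The condition $H \vartriangleleft J \leq G$ forces $J \subseteq N_G(H)$, and then subgroups $J$ with $J/H \cong C_d$ are in bijection with cyclic subgroups of order $d$ inside the finite group $N_G(H)/H$. Writing $c_d(Q)$ for the number of cyclic subgroups of order $d$ in a finite group $Q$, this gives
\[
  \kT_n(C_d) \;=\; \sum_{H \in X_n} c_d\bigl(N_G(H)/H\bigr).
\]

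Now I would invoke the elementary identity $\sum_{d \mid m} \phi(d) c_d(Q) = |Q|$ for any finite group $Q$ of order $m$, which holds because $\phi(d) c_d(Q)$ counts the elements of exact order $d$ in $Q$. Since $|N_G(H)/H|$ divides $n$, and $c_d(Q) = 0$ whenever $d \nmid |Q|$, I can replace the sum over $d \mid |N_G(H)/H|$ by the sum over $d \mid n$, getting
\[
  \sum_{d \mid n} \phi(d) \kT_n(C_d) \;=\; \sum_{H \in X_n} [N_G(H) : H] \;=\; \sum_{H \in X_n} \frac{n}{|\text{orbit}(H)|}.
\]

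Finally, grouping $H$'s by orbit and using $\sum_{H \in \mathcal{O}} 1/|\mathcal{O}| = 1$ for each orbit $\mathcal{O}$, the right-hand side equals $n$ times the number of orbits, that is $n\kI_n$. Dividing by $n$ yields \eqref{eq:1:1}. I expect no real obstacle: the only point to check carefully is that $\mathscr{F}$ being closed under conjugation makes the $G$-action on $X_n$ well-defined, and that the finiteness hypothesis on $\mathscr{F}$ guarantees $\kI_n < \infty$ so that the manipulations above are legitimate even when $G$ itself is infinite; all the relevant indices $[N_G(H):H]$ and $[G:N_G(H)]$ remain finite because $[G:H] = n$.
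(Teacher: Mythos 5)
Your proof is correct and is essentially the same as the paper's: both arguments fix $H$ of index $n$, identify the admissible $J$ with cyclic subgroups of $N_G(H)/H$, use $\sum_{d}\phi(d)c_d(Q)=|Q|$ to get a contribution of $(N_G(H):H)$ per subgroup, and then sum over the conjugacy class (orbit) of $H$ to obtain $n$ per class. Your write-up simply makes the orbit-counting bookkeeping more explicit; there is no substantive difference.
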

\begin{proof}
For fixed $H\in \mathscr{F}$ with $(G:H)=n$, let's compute the
contribute of the chains of the form $H \vartriangleleft J \leq G$. All
the admissible $J$ are contained in the normalizer $N_G(H)$, and the
number of subgroups in $N_G(H)/H$ isomorphic to $C_d$ multiplied by
$\phi(d)$ counts the number of elements of $N_G(H)/H$ with order
precisely equal to $d$, because $\phi(d)$ is the number of possible
  generators of a group isomorphic to $C_d$. The contribute for all possible $d$ is hence
equal to $(N_G(H):H)$, and having $H$ precisely $(G:N_G(H))$
conjugates its conjugacy class contributes $n$ to the sum, i.e. $1$ to
the full expression.
\end{proof}

The absolute Galois group of a $\kp$-adic field has only a finite
number of closed subgroups with fixed index and consequently the
closed subgroups can be taken as family $\mathscr{F}$. By Galois
theory the formula \eqref{eq:1:1} can be interpreted denoting with
$\kI_n$ the number of isomorphism classes of extensions $L/K$ of
degree $n$ over a fixed field $K$, and with $\kT_n(Q)$ the number of
all towers of extensions $L/F/K$ such that $[L:K] = n$, and $L/F$ is
Galois with group isomorphic to $Q$.  Similarly, the above formula can
be used to count, say, extensions with prescribed ramification and
inertia, all the extensions of given degree, or totally ramified
extensions with prescribed valuation of the different, restricting the
computation via an appropriate choice of the family $\mathscr{F}$ of
subgroups of $G$.

The above formula can be applied to local fields with great
effectiveness because the number of cyclic extension with prescribed
ramification and inertia (which will be carried over in the next
section) has little dependence on the particular field taken into
account, and only depends on the absolute degree over
$\Q_p$, the absolute inertia, and the $p$-part of the group of the
roots of the unity.

\section{On the number of cyclic extensions}

In this section we briefly deduce via class field theory the number
$\kC(F,e,f)$ of cyclic extensions of a $\kp$-adic field $F$ with
prescribed ramification $e$, inertia $f$ and degree $d=ef$, and the
number $\kC(F,d)$ of all cyclic extension of degree $d$, for any $d$.

By class field theory (see \cite{Serre1979local,fesenko2002local}),
the maximal abelian extension with exponent $d$ has Galois group
isomorphic to the biggest quotient of $F^\times$ which has exponent
$d$, and consequently its Galois group is isomorphic to
$F^\times/(F^\times)^d$. Furthermore, the upper numbering ramification
groups are the images of the principal units $U_0,U_1,\dots$ under
this isomorphism.

The choice of a uniformizer $\pi$ provides a factorization
$F=\langle \pi \rangle \times U_0$, and as is well known \cite[see (3) of
Cor. 6.5, and also Prop. 5.4 and Cor. 7.3]{fesenko2002local}, $U_0$ is
isomorphic to the direct product of the group of the roots of the
unity $\mu_F$ and a free $\Z_p$ module with rank
$m=[F:\Q_p]$. Consequently calling $G$ the Galois group of the maximal
abelian extension with exponent $d$ we have
\begin{equation*}
  G \cong C_d \times C_z \times C_{p^r}^m \times C_{p^{\min\{\xi,r\}}},
  \qquad G^0 \cong \{1\} \times C_z \times C_{p^r}^m \times C_{p^{\min\{\xi,r\}}}
\end{equation*}
where $d = p^rk$ with $(d,k) = 1$, $z$ is the g.c.d. of $k$ and the
order $|\bar F^\times| = p^{f(F/\Q_p)}-1$ of the group of the roots of
the unity with order prime with $p$, and $\xi$ is the integer such
that $p^\xi$ is the order of the group of the roots of the unity with
$p$-power order.

The number of subgroups $H \subseteq G$ such that $G/H\cong{}C_d$ and
$G/(HG^0)\cong{}C_f$ will be computed using the duality theory of
finite abelian groups. Let $\hat{G}=Hom(G,\Q/\Z)$, and
for each subgroup $H$ of $G$ put
$H^\bot=\{\phi\in\hat{G}:\phi(x)=0,\text{ for }x \in H\} \cong
\widehat{G/H}$.

The conditions on $H$ amounts to having $H^\bot \cong C_d$, and
$(HG^0)^\bot = H^\bot \cap (G^0)^\bot \cong C_f$. Since we have
(non-canonically) that
\begin{equation*}
(G^0)^\bot \cong C_d \times \{1\} \times \{1\} \times \{1\} \subseteq C_d \times C_z \times
C_{p^r}^m \times C_{p^{\min\{\xi,r\}}} \cong \hat G,
\end{equation*}
we must count the number of subgroups isomorphic to $C_d$, generated
by an element of the form $(x,y)$ with $x \in C_d$ and $y \in C_z
\times C_{p^r}^m \times C_{p^{\min\{\xi,r\}}}$ say, such that the
intersection with $(G^0)^\bot$ is isomorphic to $C_f$. The order of $e
\cdot x$ must be equal to $f$, and since the map of multiplication by
$e$ from $C_d$ to $C_d$ is $e$-to-$1$ and has image isomorphic to
$C_f$ we have that the number of possible $x$ is $e$ times the number
of generators of $C_f$, and hence is equal to $e\phi(f)$.

The $y$ coordinate on the other hand should have order precisely equal
to $e$, which we write $e = p^sh$ with $(h,p)=1$, and this is
impossible if $h \nmid z$, while if $h\mid z$ we have
\begin{equation*}
  \phi(h) \cdot \Pi_p(m,s,\xi)
\end{equation*}
possibilities for $y$, where for all $p,m,s,\xi$ we define for
convenience
\begin{equation}
\label{eq:2:0}
  \Pi_p(m,s,\xi) = \left\{\begin{array}{cl}
 1 & \text{ if }s = 0, \\
 p^{ms+\min\{\xi,s\}}-p^{m(s-1)+\min\{\xi,s-1\}} &\text{ if }s > 0,
\end{array}
\right.
\end{equation}
which counts the number of elements of order $p^s$ in a group
isomorphic to $C_{p^r}^m \times C_{p^{\min\{\xi,r\}}}$ (for any $r\geq s$).

 Since we counted the number of good generators, to
obtain the number of good groups we must divide by $\phi(ef)$ obtaining
\begin{equation}
\label{eq:2:1}
  \kC(F,e,f) = \frac{e \phi(h)
    \phi(f)}{\phi(ef)} \cdot \Pi_p(m,s,\xi)
\end{equation}
if $h \mid (p^{f(F/\Q_p)}-1)$, while $\kC(F,e,f) = 0$ if $h \nmid
(p^{f(F/\Q_p)}-1)$.

The total number of cyclic extensions of degree $d = p^rk$ can be
deduced similarly, considering that
\begin{equation*}
  G \cong C_k \times C_z \times C_{p^r}^{m+1} \times C_{\min\{\xi,r\}}.
\end{equation*}
Let's consider the function $\psi(u,v)$ which for natural $u,v$ counts
the number of elements with order $u$ in the group $C_u \times C_v$,
and can be expressed as
\begin{equation}
\label{eq:2:2}
  \psi(u,v) = u\cdot(u,v) \cdot \prod_{\substack{\ell\text{ prime}\\\ell\mid u/(u,v)}}
      \left(1-\frac{1}{\ell}\right)
   \cdot \prod_{\substack{\ell\text{ prime}\\\ell \mid u,\ \ell\nmid
      u/(u,v)}}\left(1-\frac{1}{\ell^2}\right).
\end{equation}
A computation similar to what done above tells us that the total
number of $C_d$-extensions is
\begin{equation}
\label{eq:2:3}
  \kC(F, d) = \frac{\psi(k, p^{f(K/\Q_p)}-1)}{\phi(d)} \cdot
      \Pi_p(m+1,r,\xi).
\end{equation}

\section{The formula for the number of isomorphism classes of extensions}

Let's recall Krasner formula for the number $\kN(K,e,f)$ of extensions
with ramification $e=p^sh$ (with $(p,h)=1$) and inertia $f$ of a field
$K$ having absolute degree $n_0 = [K:\Q_p]$. The formula is
\begin{equation*}
  \kN(K,e,f) = e \cdot \sum_{i=0}^s p^i\left(p^{\varepsilon(i)N} - p^{\varepsilon(i-1)N}\right),
\end{equation*}
where $N = n_0ef$ and
\begin{equation*}
\varepsilon(i) = \left\{\begin{array}{cl}
-\infty & \text{if }i=-1, \\
0 & \text{if }i=0, \\
p^{-1}+p^{-2}+\dots+p^{-i} & \text{if }i>0.
\end{array}\right.
\end{equation*}
Denote by convenience with $\Sigma_p(N,s)$ the sum in the Krasner
formula
\begin{equation} \label{eq:3:1}
  \Sigma_p(N,s) = \sum_{i=0}^s p^i\left(p^{\varepsilon(i)N} - p^{\varepsilon(i-1)N}\right),
\end{equation}
so that it can be written as $\kN(K,e,f) = e \cdot \Sigma_p(N,s)$.

If we ignore for a moment the dependence of $\kC(F,e,f)$ on the group
of $p$-power roots of the unit, we can count the number of isomorphism
classes of extensions iterating on all the towers $L/F/K$ with $F/K$
having ramification $e'$ and inertia $f'$, and $L/F$ with ramification
$e''$ and inertia $f''$, with $e'e''=e$ and $f'f''=f$. Indicating with
$F^{e',f'}$ a ``generic'' extension with ramification $e'$ and inertia
$f'$ over $K$, we obtain
\begin{align*}
  \kI(K,e,f) &= \frac{1}{n}\sum_{\substack{f'f''=f\\e'e''=e}}
  \phi(e''f'') \cdot \kN(K,e',f') \cdot \kC(F^{e',f'},e'',f'') \\
  &= \frac{1}{f}\sum_{
    \substack{f'f''=f\\e'e''=e\\
    h'' \big| \big(p^{f_0f'}-1\big)}}
      \phi(h'')\phi(f'') \cdot \Sigma_p(n_0e'f',s') \cdot \Pi_p(n_0e'f',s'',\xi),
\end{align*}
where in all the sum we always put $e'=p^{s'}h'$, $e''=p^{s''}h''$
with $(p,h') = (p,h'') = 1$.

But unluckily the $\xi$ describing the order of the group of $p$-power
roots of the unity in our ``generic'' extension $F^{e',f'}$ is not well
defined, and depends on the particular extension $F^{e',f'}/K$. If we start
putting $\xi = 0$ while counting all the towers $L/F/K$, each factor
$\Pi_p(\cdot,\cdot,0)$ should be corrected by a term
$\Pi_p(\cdot,\cdot,1) - \Pi_p(\cdot,\cdot,0)$ for all towers with $F
\supseteq K(\zeta_p)$, another correction term $\Pi_p(\cdot,\cdot,2) -
\Pi_p(\cdot,\cdot,1)$ is required for all
$F\supseteq{}K(\zeta_{p^2})$, and so on.

Consequently, let's define for convenience the quantity
$\Delta_p(m,s,0) = \Pi_p(m,s,0)$, and
$\Delta_p(m,s,i)=\Pi_p(m,s,i)-\Pi_p(m,s,i-1)$ for $i>0$, that gives
the main contribution and all the correction terms, and can be written
explicitly as
\begin{equation} \label{eq:3:2}
\Delta_p(m,s,i) = \left\{\begin{array}{cl}
  1                       & \text{if }s=i=0, \\
  (p^m-1)p^{m(s-1)}         & \text{if }s>i=0, \\
  (p-1)(p^m-1)p^{m(s-1)+i-1} & \text{if }s>i>0, \\
  (p-1)p^{ms+s-1}           & \text{if }s=i>0, \\
  0                       & \text{if }i>s.
\end{array}\right.
\end{equation}
Adding all correctiong terms for all tower where $F$ contains all the
various extension $K(\zeta_{p^i})$ for $i\leq s$, we have

\begin{theo} \label{th1}
The number of isomorphism classes of extensions with ramification $e$
and inertia $f$ of a field $K$ of absolute degree $n_0 = [K:\Q_p]$
and absolute inertia $f_0 = f(K/\Q_p)$ is
\begin{equation*}
  \kI(K,e,f) = \frac{1}{f}\sum_{
    \substack{0\leq i \leq s\\f'f''f^{(i)}=f\\e'e''e^{(i)}=e\\
    h'' \big| \big(p^{f_0f^{(i)}f'}-1\big)}}
      \frac{\phi(h'')\phi(f'')}{e^{(i)}}
          \cdot \Sigma_p(N',s')
      \cdot \Delta_p(N',s'',i),
\end{equation*}
where $\Sigma_p$ and $\Delta_p$ are respectively defined in the
\eqref{eq:3:1} and \eqref{eq:3:2}, we have put
\begin{equation*}
  e^{(i)} = e(K(\zeta_{p^i})/K),\quad f^{(i)} =
  f(K(\zeta_{p^i})/K),\quad n^{(i)} = e^{(i)}f^{(i)},
\end{equation*}
and where throughout the sum (for all $i$, $e'$, $f'$, etc) we have put
\begin{equation*}
N' = n_0n^{(i)}e'f'
\end{equation*}
and
\begin{equation*}
  e=p^sh,\quad e'=p^{s'}h', \quad e''=p^{s''}h''
\end{equation*}
with $h,h',h''$ all prime with $p$.
\end{theo}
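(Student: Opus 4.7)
The strategy is precisely the one sketched in the paragraphs preceding the statement, so I plan to execute it in three steps: reduction to cyclic extensions via the Lemma of Section~1, a telescoping inclusion--exclusion to eliminate the dependence on $\xi$, and a reparametrization of each intermediate $F$ over $K(\zeta_{p^i})$.

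First, I would apply the Galois-theoretic form of the Lemma to $G=\mathrm{Gal}(\bar K/K)$ with $\mathscr{F}$ the family of closed subgroups whose fixed field has ramification $e$ and inertia $f$ over $K$. With $n=ef$, the quantity $\kT_n(C_d)$ counts towers $L/F/K$ with $L/F$ cyclic of degree $d=e''f''$; decomposing the ramification as $e=e'e''$ and inertia as $f=f'f''$ and substituting Krasner's formula $\kN(K,e',f')=e'\,\Sigma_p(n_0e'f',s')$ together with \eqref{eq:2:1}, the displayed identity
\begin{equation*}
  \kI(K,e,f) = \frac{1}{f}\sum \phi(h'')\phi(f'')\,\Sigma_p(n_0e'f',s')\,\Pi_p(n_0e'f',s'',\xi_F)
\end{equation*}
of the paragraph before the theorem follows, where the factor $\xi_F$ still depends on the particular intermediate field.

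Next, I would write $\Pi_p(\cdot,\cdot,\xi_F)=\sum_{i=0}^{\xi_F}\Delta_p(\cdot,\cdot,i)$ via the telescoping in \eqref{eq:3:2} and swap summations: the outer sum becomes one over $i\ge 0$, with the inner sum now ranging over towers for which $F\supseteq K(\zeta_{p^i})$, weighted by $\Delta_p(N',s'',i)$. The $i$-range is truncated at $i\le s$ because $\Delta_p(m,s'',i)=0$ for $i>s''$ and $s''\le s$. Each such $F$ is canonically an extension of $K_i:=K(\zeta_{p^i})$, so I would reparametrize with $e',f'$ now denoting the \emph{relative} ramification and inertia of $F/K_i$, giving $e=e'e''e^{(i)}$ and $f=f'f''f^{(i)}$. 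Krasner's formula applied over $K_i$ replaces $n_0$ by $n_0 n^{(i)}$ and yields $\kN(K_i,e',f')=e'\,\Sigma_p(N',s')$ with $N'=n_0n^{(i)}e'f'=[F:\Q_p]$, while the cyclic-extension count over $F$ imposes the divisibility constraint $h''\mid p^{f(F/\Q_p)}-1=p^{f_0f^{(i)}f'}-1$. Collecting the prefactors,
\begin{equation*}
  \frac{\phi(e''f'')}{ef}\cdot e' \cdot \frac{e''\phi(h'')\phi(f'')}{\phi(e''f'')} \;=\; \frac{e'e''\phi(h'')\phi(f'')}{ef} \;=\; \frac{\phi(h'')\phi(f'')}{f\,e^{(i)}},
\end{equation*}
produces exactly the theorem's formula.

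The main difficulty is the bookkeeping of the telescoping: I must verify that swapping the order of summation and restricting to $F\supseteq K_i$ is a bijection with the towers obtained by reparametrization over $K_i$, and that each tower with $\xi_F=j$ is counted with total weight $\Pi_p(\cdot,\cdot,j)$ via its $j+1$ contributions at levels $i=0,1,\dots,j$. Once this is in place, the Krasner and cyclic formulas plug in cleanly and the collection of prefactors is routine.
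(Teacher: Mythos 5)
Your proposal is correct and follows the paper's own derivation step for step: the group-theoretic Lemma combined with Krasner's formula and the cyclic count \eqref{eq:2:1}, then the telescoping $\Pi_p(\cdot,\cdot,\xi_F)=\sum_{i=0}^{\xi_F}\Delta_p(\cdot,\cdot,i)$ with the swap of summation and the reparametrization of towers with $F\supseteq K(\zeta_{p^i})$ as extensions of $K(\zeta_{p^i})$. The prefactor bookkeeping $e'e''/(ef)=1/(f e^{(i)})$ is exactly what yields the stated coefficient, and your verification of the truncation $i\leq s$ via $\Delta_p(m,s'',i)=0$ for $i>s''$ matches the paper's (more tersely stated) argument.
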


\begin{remark}
When $s=0$ (i.e. $p\nmid e$) the formula has the much simpler form
\begin{align*}
\kI(K,e,f) &= \frac{1}{f}\sum_{
    \substack{f'f''=f\\e'e''=e\\
    e'' \big| \big(p^{f_0f'}-1\big)}}
      \phi(f'')\cdot \phi(e'') \\
 &= \frac{1}{f}\sum_{f'f''=f} \phi(f'') \cdot \left(e,
   p^{f_0f'}-1\right)
\end{align*}
because we are adding $\phi(e'')$ for all $e''$ that divide both $e$
and $p^{f_0f'}-1$,
\begin{equation*}
= \frac{1}{f}\sum_{i=0}^{f-1} \left(e, p^{f_0(f',i)}-1\right)
\end{equation*}
because each divisor $f'$ of $f$ appears precisely $\phi(f/f') =
\phi(f'')$ times in the set of the $(f,i)$ for $i=0,\dots,f-1$.  This
is precisely the formula obtained in \cite[Remark 4.2,
pag. 27]{hou2004enumeration} when $p\nmid e$.
\end{remark}

With a computation similar to what done for Theorem
\ref{th1}, we obtain
\begin{theo}  
  The total number of isomorphism classes of extensions with degree
  $n$ of a field $K$ of absolute degree $n_0 = [K:\Q_p]$ and absolute
  inertia $f_0 = f(K/\Q_p)$ is
\begin{equation*}
  \kI(K,n) = \frac{1}{n} \sum_{
    \substack{0\leq i \leq t\\de'f'n^{(i)}=n}}
      e' \psi(k,p^{f_0f^{(i)}f'}-1)
      \cdot \Sigma_p(N', s')
      \cdot \Delta_p(N'+1,r,i),
\end{equation*}
where we are keeping the same notation as in Theorem \ref{th1},
$\psi(u,v)$ is defined in the \eqref{eq:2:2}, $p^t$ is the biggest
power of $p$ dividing $n$, and moreover throughout
the sum we have written $d=p^rk$ with $(k,p) = 1$.
\end{theo}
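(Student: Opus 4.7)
The plan is to follow exactly the pattern of Theorem \ref{th1}, replacing the count of cyclic extensions with fixed ramification and inertia by the count of all cyclic extensions of fixed degree. First I would apply the group-theoretic lemma of Section 1 to the absolute Galois group $G_K$, taking $\mathscr{F}$ to be the family of all closed subgroups of index $n$, so that $\kI(K,n) = \frac{1}{n}\sum_{d\mid n}\phi(d)\,\kT_n(C_d)$. Each $\kT_n(C_d)$ enumerates towers $L/F/K$ with $[L:K]=n$ and $L/F$ cyclic of degree $d$; grouping these towers according to the ramification $e'$ and inertia $f'$ of the intermediate field $F/K$, and using that (up to the $\xi$-issue addressed below) the counts for $F/K$ and for $L/F$ are independent, I obtain
\begin{equation*}
\kT_n(C_d) \;=\; \sum_{e'f'd\,=\,n} \kN(K,e',f')\cdot \kC(F^{e',f'},d).
\end{equation*}

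Next I substitute the Krasner formula $\kN(K,e',f')=e'\,\Sigma_p(n_0e'f',s')$ and the expression \eqref{eq:2:3} for $\kC(F,d)$, writing $d=p^rk$ with $(p,k)=1$. The factor $\phi(d)$ coming from the lemma cancels the $\phi(d)$ in the denominator of $\kC$, and one is left with $e'\,\psi(k,p^{f(F/\Q_p)}-1)\,\Sigma_p(\cdot,s')\,\Pi_p(\cdot+1,r,\xi)$. This would already give the claimed formula but for the same difficulty encountered in Theorem \ref{th1}: the invariant $\xi$ (the $p$-part of $\mu_F$) depends on the particular intermediate field $F$ and is not determined by $e',f'$ alone.

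I would resolve this exactly as before, using the telescoping identity $\Pi_p(m,r,\xi)=\sum_{i=0}^{\xi}\Delta_p(m,r,i)$ built into the definition \eqref{eq:3:2}. Starting with the value $\xi=0$ everywhere, one then adds, for each $i\geq 1$, a correction $\Delta_p(\cdot,r,i)$ ranging over the sub-collection of towers for which $F\supseteq K(\zeta_{p^i})$. Such an $F$ is exactly an extension of the fixed field $K(\zeta_{p^i})$, which has absolute degree $n_0 n^{(i)}$ and absolute inertia $f_0 f^{(i)}$; so the Krasner count of the admissible $F/K(\zeta_{p^i})$ with relative ramification $e'$ and inertia $f'$ is $e'\,\Sigma_p(N',s')$ with $N'=n_0n^{(i)}e'f'$, and the corresponding residue field of $F$ has size $p^{f_0 f^{(i)} f'}$, which is where the exponent in $\psi$ comes from. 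Collecting the terms for $0\leq i\leq t$ (beyond which $\Delta_p(\cdot,r,i)$ vanishes since $r\leq t$) gives the stated formula.

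The only nontrivial step is the correction bookkeeping: one must check that replacing $\Pi_p(N'+1,r,\xi)$ by the sum of $\Delta_p(N'+1,r,i)$ over those $i$ with $K(\zeta_{p^i})\subseteq F$ reproduces, for each individual $F$, the correct $\Pi_p(N'+1,r,\xi(F))$. This is immediate from the telescoping definition of $\Delta_p$ together with the fact that $K(\zeta_{p^i})\subseteq F$ if and only if $\zeta_{p^i}\in F$, i.e.\ iff $i\leq\xi(F)$; but one must be careful that the re-parameterization of $F$ as an extension of $K(\zeta_{p^i})$ does not introduce double-counting or alter the index conditions, and that the constraint $n^{(i)}\mid e'f'$ (implicit in $de'f'n^{(i)}=n$) is precisely the condition for $K(\zeta_{p^i})$ to embed into $F$. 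This is the same verification already made for Theorem \ref{th1}, so no new obstacle arises.
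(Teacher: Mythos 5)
Your proposal is correct and follows essentially the same route the paper intends (the paper offers no separate proof, only remarking that the computation is analogous to Theorem~\ref{th1}): decompose $\kI(K,n)$ via the lemma into towers $L/F/K$ with cyclic top step of degree $d$, insert Krasner's count for $F$ and the total cyclic count \eqref{eq:2:3}, cancel $\phi(d)$, and absorb the dependence on $\xi$ by re-basing the intermediate field over $K(\zeta_{p^i})$ and telescoping with $\Delta_p(N'+1,r,i)$. The only blemish is the parenthetical claim that ``$n^{(i)}\mid e'f'$ is the condition for $K(\zeta_{p^i})$ to embed into $F$'' --- the constraint $de'f'n^{(i)}=n$ merely records that $e',f'$ are taken relative to $K(\zeta_{p^i})$ and is not itself an embedding criterion --- but this aside does not affect the argument.
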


\section*{Acknowledgements}

We would like to thank Sara Checcoli, Roberto Dvornicich, Vincenzo
Mantova and Dajano Tossici for the many helpful discussions and
advices while writing down this note.

\providecommand{\bysame}{\leavevmode\hbox to3em{\hrulefill}\thinspace}
\providecommand{\MR}{\relax\ifhmode\unskip\space\fi MR }
% \MRhref is called by the amsart/book/proc definition of \MR.
\providecommand{\MRhref}[2]{%
  \href{http://www.ams.org/mathscinet-getitem?mr=#1}{#2}
}
\providecommand{\href}[2]{#2}

\end{document}